\documentclass[11pt]{article}

\usepackage[margin=1in]{geometry}
\usepackage{amsmath,amssymb,amsthm,mathtools}
\usepackage{microtype}
\usepackage[colorlinks=true,linkcolor=blue,citecolor=blue,urlcolor=blue]{hyperref}

\hypersetup{
  pdftitle={Radial Extremality for LRU Caching and the Fill--Holst Conjecture},
  pdfauthor={Christopher D. Long},
  pdfsubject={LRU caching and the Fill--Holst conjecture}
}

\newtheorem{theorem}{Theorem}[section]
\newtheorem{lemma}[theorem]{Lemma}

\newtheorem{corollary}[theorem]{Corollary}
\theoremstyle{remark}
\newtheorem{remark}[theorem]{Remark}

\newcommand{\R}{\mathbb{R}}

\newcommand{\dd}{\,\mathrm{d}}
\newcommand{\E}{\mathbb{E}}
\newcommand{\Pp}{\mathbb{P}}
\newcommand{\Exp}{\operatorname{Exp}}
\newcommand{\card}[1]{\lvert #1\rvert}
\newcommand{\set}[1]{\left\{#1\right\}}
\newcommand{\one}{\mathbf{1}}
\newcommand{\doi}[1]{\href{https://doi.org/#1}{\nolinkurl{doi:#1}}}

\title{Radial Extremality for LRU Caching and the Fill--Holst Conjecture}
\author{Christopher D. Long\\
Headlamp Software\\
\texttt{galizur@gmail.com}}
\date{}

\begin{document}
\maketitle

\begin{abstract}
For the independent reference model with popularity vector
$p\in\Delta_N^\circ$, let $H_C(p)$ denote the exact stationary hit rate
of an LRU cache of capacity $C$.  We prove that, for every $1\le C<N$,
the uniform popularity vector is the unique global minimizer of $H_C$ on
the interior simplex.  More sharply, along every nonconstant segment from
the uniform vector to an interior point, the LRU hit rate is strictly
increasing.  The proof uses the standard exponential-age representation
of the stationary LRU cache and gives an explicit positive pair-square
formula for the radial derivative.  Equivalently, for the move-to-front
rule, the stationary search-cost distribution improves strictly in the
usual stochastic order along every nonconstant ray away from uniform.
This proves the radial restriction of the Fill--Holst Schur-concavity
conjecture for move-to-front search-cost tails.  In particular, all LRU
miss probabilities and all nonconstant nondecreasing stack-depth costs
decrease strictly along such rays.  The result is radial
rather than Schur-convex: full majorization monotonicity for LRU is known
to fail, and the proof identifies the special positivity that survives on
rays from the uniform vector.
\end{abstract}

\section{Introduction}

The independent reference model (IRM) is the classical model in which
successive requests are independent and item $i$ is requested with
probability $p_i$ in discrete time, equivalently with rate $p_i$ in the
rate-one Poissonized model.  Under LRU, the stationary stack order has an
exact exponential-age representation: if $A_1,\ldots,A_N$ are independent
exponential random variables with $A_i\sim\Exp(p_i)$, then the stationary
cache of capacity $C$ consists of the $C$ items with smallest ages.  This
representation follows by Poissonizing the request sequence into
independent Poisson streams of rates $p_i$ and using their backward
recurrence times; conditioning on the next arrival recovers the original
discrete request distribution.  The hit rate is therefore an exact
finite-dimensional function of the popularity vector, not a
characteristic-time or Che-type approximation.
Exact formulae and asymptotic approximations for LRU under the IRM go
back to King, Fagin, Fagin--Price, and Flajolet--Gardy--Thimonier
\cite{King1971,Fagin1977,FaginPrice1978,FlajoletGardyThimonier1992};
Che--Tung--Wang introduced a now-standard characteristic-time
approximation for hierarchical Web caching \cite{CheTungWang2002},
Jelenkovi\'c studied asymptotic MTF/LRU fault probabilities for broad
popularity classes \cite{Jelenkovic1999}, and Berthet discusses the
identity of the King and Flajolet--Gardy--Thimonier formulae for exact
LRU miss-rate computation \cite{Berthet2016}.

A natural monotonicity question asks whether making the popularity vector
more uneven must improve the LRU hit rate.  In the language of
majorization \cite{MarshallOlkinArnold2011}, this would assert
Schur-convexity of the hit rate, or equivalently Schur-concavity of the
miss rate.  A closely related move-to-front literature considered the
same distributional question in search-cost language.  Under independent
requests, the stationary move-to-front list is the LRU recency order, and
the LRU miss probability for a cache of capacity $C$ is the tail
probability that the move-to-front search cost exceeds $C$.  Fill and
Holst studied the stationary move-to-front search-cost distribution and
its cache-fault interpretation, and Hildebrand later addressed their
conjecture that each search-cost tail is Schur-concave in the popularity
vector \cite{FillHolst1996,Hildebrand1999}.  Hildebrand disproved that
conjecture.

Thus full Schur-concavity is too strong even for the natural MTF/LRU tail
probabilities.  Vanichpun and Makowski also showed, in a broader caching
setting, that majorization monotonicity may fail under LRU
\cite{VanichpunMakowski2004,MakowskiVanichpun2005}.  The theorem below
identifies a sharp positive remnant of that false global principle:
although arbitrary majorization directions can move the LRU hit rate in
the wrong direction, every straight-line move away from the uniform
popularity vector strictly increases the hit rate.  Equivalently, the
radial restriction of the Fill--Holst tail-concavity conjecture is true
for MTF/LRU\@.

The proof is deliberately finite and exact.  We first derive a residual
subset expansion for $H_C(p)$.  A variance identity on each residual
subset converts that expansion into a pair-square decomposition around
the uniform vector.  Radial differentiation then produces an explicit
pair-square formula
\[
        \frac{\dd}{\dd\theta}H_C(u+\theta(q-u))
        =
        \frac{1}{N\theta}
        \sum_{1\le a<b\le N}
        (p_a(\theta)-p_b(\theta))^2 K_{ab}^{(C)}(p(\theta)),
\]
where every kernel $K_{ab}^{(C)}$ is strictly positive.  The positivity
of the kernels is proved by converting the alternating subset sums into
positive product integrals.  This gives a local certificate for radial
monotonicity, not merely a global comparison.

After proving the main theorem, we record its distributional
consequences for the full LRU stack-depth distribution, or equivalently
for the stationary move-to-front search-cost distribution.  We also
include an appendix with an independent occupancy-Jacobian proof.  That
proof explains at the differential level why radial monotonicity is
compatible with failure of Schur-convexity: the asymmetric occupancy
Jacobian is not positive in every tangent direction, but its
sign-indefinite minor collapses to a positive square along rays from the
uniform vector.

\section{Setup and main results}

Let
\[
        \Delta_N^\circ=
        \set{p=(p_1,\ldots,p_N):p_i>0,\ \sum_{i=1}^N p_i=1},
        \qquad
        u=\left(\frac1N,\ldots,\frac1N\right).
\]
For $R\subseteq [N]:=\set{1,\ldots,N}$, write
\[
        p_R=\sum_{i\in R}p_i,
        \qquad p_\varnothing=0.
\]
Let $A_1,\ldots,A_N$ be independent exponential random variables with
$A_i\sim\Exp(p_i)$, and let $S_C$ be the set of the $C$ indices with
smallest ages.  The exact stationary LRU hit rate is
\begin{equation}\label{eq:HC-def}
        H_C(p)=\E\left[\sum_{i\in S_C}p_i\right].
\end{equation}
Equivalently, since the next request has distribution $p$ independently
of the stationary stack, $H_C(p)$ is the stationary probability that the
next requested item is already in an LRU cache of capacity $C$.

\begin{theorem}[Radial LRU extremality]\label{thm:main}
Let $N\ge2$, let $1\le C<N$, and let $q\in\Delta_N^\circ$ be nonuniform.
Define
\[
        p(\theta)=u+\theta(q-u),\qquad 0\le\theta\le1.
\]
Then
\[
        \frac{\dd}{\dd\theta}H_C(p(\theta))>0,
        \qquad 0<\theta\le1.
\]
Consequently $\theta\mapsto H_C(p(\theta))$ is strictly increasing on
$(0,1]$.
\end{theorem}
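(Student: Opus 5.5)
The plan is to follow the route outlined in the introduction: build an exact finite formula for $H_C(p)$, rewrite it around the uniform vector as a sum of pair-squares, and then differentiate along the ray.

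\textbf{Step 1: residual subset expansion.} I would write $H_C(p)=\sum_i p_i\,\Pp(i\in S_C)$. The event $i\notin S_C$ means at least $N-C$ other items have age smaller than $A_i$. Conditioning on $A_i=t$ and applying inclusion--exclusion over the set $R$ of items that are older than $t$ gives integrals of the form $\int_0^\infty p_i e^{-p_i t}\prod_{j\in R}e^{-p_jt}\dd t=p_i/(p_i+p_R)$. This should produce an expansion
\[
        H_C(p)=\sum_{R}c_{\card R}^{(C)}\,\frac{\sum_{i\notin R}p_i^2\cdot(\text{rational in }p_R)}{\cdots},
\]
with alternating integer coefficients that depend only on $\card R$ and $C$. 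More cleanly, I expect it to take the form $\sum_R c_{\card R}\,\Phi(p_R)\sum_{i\notin R}p_i^2$, or something equivalent built from $1-p_R$.

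\textbf{Step 2: variance identity and radial derivative.} For each residual set $T=[N]\setminus R$ I would use
\[
        \sum_{i\in T}p_i^2=\frac{p_T^2}{\card T}+\frac{1}{\card T}\sum_{\{a,b\}\subseteq T}(p_a-p_b)^2 .
\]
The first term depends only on the partial sums $p_T$.

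Along the ray $p(\theta)-u=\theta(q-u)$, every difference satisfies $p_a-p_b=\theta(q_a-q_b)$. Hence each pair-square term is $\theta^2$ times a constant, and its $\theta$-derivative is $(2/\theta)(p_a-p_b)^2$. The subset sums satisfy $p_R-\card R/N=\theta(q_R-\card R/N)$. After symmetrizing over subsets of equal size, the contribution of these terms should also rearrange into pair differences through the identity
\[
        \sum_{\card R=k}\bigl(p_R-k/N\bigr)^2\propto\sum_{a<b}(p_a-p_b)^2,
\]
with the proportionality holding at the level of weighted sums against the kernel. Collecting terms gives the displayed formula
\[
        \frac{\dd}{\dd\theta}H_C=\frac1{N\theta}\sum_{a<b}(p_a-p_b)^2K_{ab}^{(C)}(p),
\]
where $K_{ab}$ is an alternating sum over subsets $R\not\ni a,b$ of rational functions of $p_R$.

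\textbf{Step 3: positivity of $K_{ab}$ (main obstacle).} The alternating sign pattern is exactly what makes the global Schur statement fail, so this is where the real work lies. My first attempt would reverse Step 1: re-express each rational term as an exponential integral, $1/(1-p_R)^m=\frac{1}{(m-1)!}\int_0^\infty t^{m-1}e^{-(1-p_R)t}\dd t$. The inclusion--exclusion over $R$ then recombines into products $\prod_{j\ne a,b}\bigl(1-e^{-p_jt}\bigr)$ or $e^{-p_jt}$, restricted to an elementary symmetric degree that encodes the $C$-threshold. The aim is to show
\[
        K_{ab}=\int_0^\infty w(t)\,e_{k}\bigl(\{1-e^{-p_jt}\},\{e^{-p_jt}\}\bigr)\dd t,
\]
a positive weight times an occupancy probability that the other $N-2$ items split as exactly $C-1$ young and $N-C-1$ old at time $t$. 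Such an integrand is positive whenever $1\le C<N$.

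Once this is in place, every $K_{ab}>0$. Nonuniformity of $q$ gives some $a,b$ with $p_a\neq p_b$ for $\theta>0$, so the derivative is strictly positive, and strict monotonicity on $(0,1]$ follows by integration.

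If the direct recombination fails, the fallback is the occupancy-Jacobian approach mentioned for the appendix. There I would express $\partial_\theta H_C$ through the derivative of $\Pp(i\in S_C)$ with respect to $p_j$. Along the ray the tangent vector is $(p-u)/\theta$, and the sign-indefinite minor becomes $(p_a-p_b)\cdot(\text{something proportional to }p_a-p_b)$, which is a square.
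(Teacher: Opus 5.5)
Your outline has the paper's architecture: residual expansion, variance identity, radial differentiation, and exponential-integral positivity. However, the step that produces the kernel is asserted rather than derived, and the mechanism you suggest for it is not the one that works. Carried out exactly, Step 1 gives $H_C(p)=\sum_{m\ge L}\alpha_m\sum_{\card R=m}\bigl(\sum_{i\in R}p_i^2\bigr)/p_R$, with $\alpha_m=(-1)^{m-L}\binom{m-2}{m-L}$ and $L=N-C+1$. (Also, the opening sentence of your Step 1 should say ``at least $C$ other items''.) After the variance identity, the non-pair part is $\sum_R\alpha_{\card R}\,p_R/\card R$. This is linear and symmetric in $p$, hence constant ($=C/N$) on the simplex, so nothing there has to be ``rearranged into pair differences''. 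In any case, the proportionality $\sum_{\card R=k}(p_R-k/N)^2\propto\sum_{a<b}(p_a-p_b)^2$ fails once the sum is weighted by $R$-dependent kernel values. You correctly note that each $(p_a-p_b)^2$ scales like $\theta^2$, which supplies a factor $2$. What your Step 2 never addresses is that the coefficient $1/p_R$ multiplying each pair square also moves with $\theta$, and its derivative $-(p_R-\card R/N)/(\theta p_R^2)$ is sign-indefinite. The paper absorbs this term via
\[
\theta\frac{\dd}{\dd\theta}\frac{(x_a-x_b)^2}{m x_R}
=(x_a-x_b)^2\Bigl(\frac{2}{m x_R}-\frac{x_R-m/N}{m x_R^2}\Bigr)
=(x_a-x_b)^2\Bigl(\frac{1}{m x_R}+\frac{1}{N x_R^2}\Bigr).
\]
Half of the factor $2$ cancels the $x_R$ part, and the $m/N$ part becomes the $1/(Nx_R^2)$ term. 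This is exactly where radiality enters, and ``collecting terms gives the displayed formula'' skips it.

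In Step 3, the representation you aim for only covers the $1/(Np_R^2)$ part of the kernel. That representation is a $t$-integral of the probability that $T=[N]\setminus\{a,b\}$ splits into exactly $C-1$ young and $N-C-1$ old items. Write $R=\{a,b\}\cup U$, $s=p_a+p_b$ and $r=N-C-1$. The coefficients become $(-1)^{\card U-r}\binom{\card U}{r}$, and that part of the kernel equals $\int_0^\infty te^{-st}B_r(1,t)\dd t$, with $B_r(1,t)$ exactly your occupancy probability. The other part carries the factor $1/\card R=1/(\card U+2)$ from the variance identity. Your Laplace-transform device only manufactures powers of the denominator, so it cannot produce a weight depending on $\card U$.

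The missing idea is a second integral $1/(\card U+2)=\int_0^1y^{\card U+1}\dd y$, together with the deformed product $P_t(y)=\prod_{j\in T}(1-ye^{-p_jt})$. The alternating sum is then
\[
\frac{(-y)^r}{r!}P_t^{(r)}(y)=y^r\sum_{\card W=r}e^{-p_Wt}\prod_{j\in T\setminus W}(1-ye^{-p_jt}),
\]
which is positive for $0<y\le1$. This gives $\int_0^\infty e^{-st}\int_0^1 yB_r(y,t)\dd y\dd t>0$ for that part. The recombination depends on the exact coefficients $\alpha_m$, which you left undetermined.

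Finally, the Jacobian fallback as sketched is incomplete. Besides the collapsed square, it produces the term $\frac1\theta\bigl(H_C-\frac CN\bigr)$. Its nonnegativity needs a separate occupancy-monotonicity argument ($\lambda_k\ge\lambda_i\Rightarrow\pi_k\ge\pi_i$).
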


\begin{corollary}[Uniform popularity is the unique worst case]\label{cor:global-minimum}
Let $N\ge2$ and $1\le C<N$.  For every nonuniform
$p\in\Delta_N^\circ$,
\[
        H_C(p)>H_C(u)=\frac{C}{N}.
\]
Thus $u$ is the unique global minimizer of the LRU hit rate on
$\Delta_N^\circ$.  Equivalently, the LRU miss rate $1-H_C(p)$ is uniquely
maximized at $u$ on $\Delta_N^\circ$.
\end{corollary}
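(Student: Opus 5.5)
The corollary follows from Theorem~\ref{thm:main} once we compute $H_C(u)$ and place every nonuniform point on a ray from $u$.

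First compute $H_C(u)$. At $p=u$ the ages $A_1,\ldots,A_N$ are i.i.d.\ $\Exp(1/N)$. Every item then has weight $1/N$, so whatever set $S_C$ turns out to be,
\[
\sum_{i\in S_C}p_i=\frac{C}{N}.
\]
Hence $H_C(u)=C/N$ deterministically, with no symmetry argument needed.

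Next fix a nonuniform $p\in\Delta_N^\circ$ and set $q=p$ in Theorem~\ref{thm:main}. The segment $p(\theta)=u+\theta(p-u)$ is a convex combination of the two interior points $u$ and $p$, so it lies in $\Delta_N^\circ$ for $\theta\in[0,1]$. Also $p(0)=u$ and $p(1)=p$.

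We need the continuity of $\theta\mapsto H_C(p(\theta))$ at $\theta=0$. This holds because $H_C$ is continuous, indeed smooth, on $\Delta_N^\circ$. To see this, write
\[
H_C(p)=\sum_{S}\Pp(S_C=S)\,p_S .
\]
Each probability $\Pp(S_C=S)$ is a finite integral of products of $p_ie^{-p_it}$ and $e^{-p_jt}$ with integrable exponential tails, so it depends smoothly on $p$ in the interior by dominated convergence.

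Now apply the mean value theorem on $[0,1]$. The function $g(\theta)=H_C(p(\theta))$ is continuous on $[0,1]$ and differentiable on $(0,1)$, with $g'>0$ there by Theorem~\ref{thm:main}. So for some $\xi\in(0,1)$,
\[
g(1)-g(0)=g'(\xi)>0,
\]
which gives $H_C(p)>H_C(u)=C/N$.

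Since every nonuniform interior point has strictly larger hit rate, $u$ is the unique global minimizer on $\Delta_N^\circ$. The miss-rate statement follows because $1-H_C$ is then uniquely maximized at $u$.

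The only delicate step is the behaviour at the endpoint $\theta=0$. The theorem asserts positivity of the derivative only on $(0,1]$, and the derivative formula carries a factor $1/\theta$. The argument avoids this point by using only continuity of $g$ at $0$ together with the mean value theorem on the open interval.
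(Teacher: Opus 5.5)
Your proof is correct and follows the paper's argument. You apply Theorem~\ref{thm:main} with $q=p$, use continuity of $H_C$ at $\theta=0$, and conclude strict increase from $u$ to $p$; the paper gets continuity from the rational residual formula of Lemma~\ref{lem:residual}, you from a direct smoothness argument, and your observation that $\sum_{i\in S_C}u_i=C/N$ holds pointwise is a slightly cleaner route to $H_C(u)=C/N$ than the paper's symmetry argument.
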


\begin{proof}
Apply Theorem~\ref{thm:main} with $q=p$.  Then $p(0)=u$ and $p(1)=p$.
By Lemma~\ref{lem:residual}, $H_C$ is continuous along this segment, and
Theorem~\ref{thm:main} gives strict monotonicity on $(0,1]$; hence
$H_C(p)>H_C(u)$.  Under $u$, symmetry gives $\Pp(i\in S_C)=C/N$ for
every item $i$, and therefore
\[
        H_C(u)=\sum_{i=1}^N \frac1N\cdot\frac{C}{N}=\frac{C}{N}.
\]
\end{proof}

The proof of Theorem~\ref{thm:main} proceeds through a stronger positive
kernel formula.  Put
\[
        L=N-C+1.
\]
Thus $2\le L\le N$.  For $a<b$ and $p\in\Delta_N^\circ$, define
\begin{equation}\label{eq:kernel-def}
        K_{ab}^{(C)}(p)
        =
        N
        \sum_{\substack{R\subseteq[N]:\ \{a,b\}\subseteq R\\ \card R\ge L}}
        (-1)^{\card R-L}
        \binom{\card R-2}{\card R-L}
        \left(
            \frac{1}{\card R\,p_R}
            +\frac{1}{N p_R^2}
        \right).
\end{equation}

\begin{theorem}[Positive pair-kernel derivative formula]\label{thm:kernel}
For every $p\in\Delta_N^\circ$ and every $a<b$,
\[
        K_{ab}^{(C)}(p)>0.
\]
Moreover, if $p(\theta)=u+\theta(q-u)$ with $0<\theta\le1$, then
\begin{equation}\label{eq:radial-derivative}
        \frac{\dd}{\dd\theta}H_C(p(\theta))
        =
        \frac{1}{N\theta}
        \sum_{1\le a<b\le N}
        \bigl(p_a(\theta)-p_b(\theta)\bigr)^2
        K_{ab}^{(C)}(p(\theta)).
\end{equation}
\end{theorem}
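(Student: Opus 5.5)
The plan is to work through three layers: an exact residual-subset expansion of $H_C$, a variance identity that turns it into pair squares, and an integral representation that makes the alternating kernel visibly positive.

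\textbf{Step 1: residual-subset expansion.} First I would write the miss probability $1-H_C(p)$ as a finite alternating sum over \emph{residual sets} $R$ with $\card R\ge L=N-C+1$. Starting from the exponential-age representation, item $i$ is missed exactly when $i$ lies among the $N-C$ largest ages. I would introduce events of the form ``every age in $R$ exceeds every age outside $R$'' (or the equivalent event on the minimum age over $R$). These have closed-form probabilities, because $\min_{j\in R}A_j\sim\Exp(p_R)$. Inclusion--exclusion over supersets then produces coefficients $(-1)^{\card R-L}\binom{\card R-1}{\card R-L}$, possibly after an Abel/Pascal adjustment.

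The target is an expansion in which each $R$ contributes a rational function of $p_R$ times $\sum_{i\in R}p_i^2$. Heuristically, conditional on the residual block $R$, the next request lands in $R$ with weight $\sum_{i\in R}p_i^2/p_R$ or a similar quantity. This would be the content of Lemma~\ref{lem:residual}, which the paper invokes for continuity, and it also shows that $H_C$ is smooth on $\Delta_N^\circ$.

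\textbf{Step 2: pair-square form and the radial derivative.} On each $R$ I would apply the variance identity
\[
\sum_{i\in R}p_i^2=\frac{p_R^2}{\card R}+\frac{1}{\card R}\sum_{\{a,b\}\subseteq R,\,a<b}(p_a-p_b)^2 .
\]
The first term depends only on $(p_R,\card R)$. Along the ray I would use the parametrization
\[
p_R(\theta)=\frac{\card R}{N}+\theta\Bigl(q_R-\frac{\card R}{N}\Bigr),\qquad (p_a-p_b)^2=\theta^2(q_a-q_b)^2 .
\]
This parametrization shows that $\frac{\dd}{\dd\theta}(p_a-p_b)^2=\frac{2}{\theta}(p_a-p_b)^2$.

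The remaining task is the $p_R$-only part. Its $\theta$-derivative is linear in $q_R-\card R/N=(p_R-\card R/N)/\theta$. I would rewrite $p_R-\card R/N$ itself as a combination of pair differences, using
\[
\sum_{a\in R,\,b\notin R}(p_a-p_b)=N p_R-\card R .
\]
Then I would regroup by pairs. I expect the alternating binomial weights to make the cross terms telescope into pair squares, leaving exactly the combination $\frac{1}{\card R\,p_R}+\frac{1}{Np_R^2}$ in \eqref{eq:kernel-def} after reindexing over $R\supseteq\{a,b\}$. This yields \eqref{eq:radial-derivative}. I would check the normalization against $N=2$, $C=1$, where everything can be computed directly.

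\textbf{Step 3: positivity of $K_{ab}^{(C)}$ (main obstacle).} The kernel is an alternating sum, so its positivity is not visible termwise. I would use the integral representations
\[
\frac1{x}=\int_0^\infty e^{-xt}\dd t,\qquad \frac1{x^2}=\int_0^\infty te^{-xt}\dd t,\qquad \frac1{\card R}=\int_0^1 s^{\card R-1}\dd s .
\]
With $e^{-p_Rt}=\prod_{j\in R}e^{-p_jt}$, the sum over $R\supseteq\{a,b\}$ factors over the items $j\notin\{a,b\}$. The weights $(-1)^{\card R-L}\binom{\card R-2}{\card R-L}$ are the ones that arise from extracting a coefficient, or from a repeated difference, of a product $\prod_{j}(x_j+y_j)$. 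I would therefore express the sum as $e^{-(p_a+p_b)t}$ times an integral of elementary symmetric-type products of $(1-e^{-p_jt})$ and $e^{-p_jt}$, possibly with an auxiliary Beta-type integral in a variable $s$ absorbing the binomial. Each such factor is positive, so the integrand is positive and $K_{ab}^{(C)}>0$.

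I would first try this for the $\frac1{Np_R^2}$ term alone, since it has no $\card R$-dependence, and then treat the $\frac{1}{\card R p_R}$ term with the extra $s$-integral. If matching the binomial weights fails, I would instead represent them as $\binom{m-2}{m-L}=\frac{1}{2\pi i}\oint$-type generating coefficients, or use the identity $\sum_k(-1)^k\binom{n}{k}f(k)=$ an $(n)$-th forward difference and apply the mean-value form of finite differences of completely monotone functions.

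Finally, strict positivity of the derivative in Theorem~\ref{thm:main} follows from \eqref{eq:radial-derivative}, because $q$ is nonuniform and so some pair has $p_a(\theta)\ne p_b(\theta)$.
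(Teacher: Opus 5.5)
The genuine gap is in Step~2, in where the kernel $\frac{1}{\card R\,p_R}+\frac{1}{Np_R^2}$ comes from. The part of each residual term that depends only on $(p_R,\card R)$ is exactly $p_R/\card R$, because the residual weight is $1/p_R$. Summed over sets, it gives $\sum_m\frac{\alpha_m}{m}\sum_{\card R=m}p_R=\sum_m\frac{\alpha_m}{m}\binom{N-1}{m-1}$, which is constant on the simplex. Its $\theta$-derivative is therefore identically zero, since $\sum_{\card R=m}(p_R-m/N)=0$. No regrouping or telescoping of that part can produce the kernel, and rewriting $p_R-\card R/N$ as $\frac1N\sum_{c\in R,\,d\notin R}(p_c-p_d)$ leads nowhere. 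The term $\frac{1}{Np_R^2}$ actually comes from differentiating the coefficient $1/(\card R\,p_R)$ that multiplies $(p_a-p_b)^2$. Your plan differentiates that term only through the square, which on its own gives the wrong kernel $2/(\card R\,p_R)$. The missing computation is termwise in $R$ and uses no property of the alternating weights. Since $\theta\frac{\dd}{\dd\theta}p_R=p_R-\card R/N$,
\[
\theta\frac{\dd}{\dd\theta}\,\frac{(p_a-p_b)^2}{\card R\,p_R}
=(p_a-p_b)^2\Bigl(\frac{2}{\card R\,p_R}-\frac{p_R-\card R/N}{\card R\,p_R^2}\Bigr)
=(p_a-p_b)^2\Bigl(\frac{1}{\card R\,p_R}+\frac{1}{Np_R^2}\Bigr).
\]
This is a direct algebraic split of $p_R-\card R/N$; expanding it into pair differences here would only create cubic terms. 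Your $N=2$, $C=1$ sanity check cannot detect the discrepancy: there $R=[N]$ and $p_R=1$, so $2/(\card R\,p_R)$ and $1/(\card R\,p_R)+1/(Np_R^2)$ coincide.

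Step~3 is essentially the paper's argument. Take $T=[N]\setminus\{a,b\}$, $r=L-2$, $\frac{1}{\card R}=\int_0^1y^{\card R-1}\dd y$, and the Laplace representations of $1/p_R$ and $1/p_R^2$. The alternating sum over $U=R\setminus\{a,b\}$ then equals $\frac{(-1)^ry^r}{r!}\partial_y^r\prod_{j\in T}(1-ye^{-p_jt})=y^r\sum_{W\subseteq T,\ \card W=r}e^{-p_Wt}\prod_{j\in T\setminus W}(1-ye^{-p_jt})$, which is positive for $t>0$ and $0<y\le1$. The $1/p_R$ and $1/p_R^2$ parts are shown positive separately; the $y$-integral absorbs $1/\card R$ and the $r$-th derivative absorbs the binomial. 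Step~1, by contrast, is only a sketch, and your tentative weight $\binom{\card R-1}{\card R-L}$ is not the one that occurs. The kernel needs exactly $\binom{\card R-2}{\card R-L}$ together with the weight $1/p_R$. The paper gets both in four moves:
\begin{enumerate}
\item condition on $A_i=t$;
\item expand $\prod_{j\in S}(1-e^{-p_jt})$ over $A\subseteq S$;
\item regroup by $B=S\setminus A$, so the exponent is $1-p_B=p_R$ with $R=[N]\setminus B$;
\item apply $\sum_{h=0}^{q}(-1)^h\binom{M}{h}=(-1)^q\binom{M-1}{q}$.
\end{enumerate}
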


Theorem~\ref{thm:main} follows immediately from Theorem~\ref{thm:kernel}:
if $q\ne u$ and $0<\theta\le1$, then $p(\theta)$ is nonuniform, so at
least one pair difference in \eqref{eq:radial-derivative} is nonzero.

\section{A residual-subset formula for the hit rate}

We first convert the exponential-clock representation into a finite
alternating subset expansion.  This is the only place where the cache
capacity enters in an essential way.

\begin{lemma}[Residual-subset expansion]\label{lem:residual}
Let $1\le C<N$ and $L=N-C+1$.  For $L\le m\le N$, set
\[
        \alpha_m=(-1)^{m-L}\binom{m-2}{m-L}.
\]
Then, for every $p\in\Delta_N^\circ$,
\begin{equation}\label{eq:residual-formula}
        H_C(p)
        =
        \sum_{m=L}^N \alpha_m
        \sum_{\substack{R\subseteq[N]\\ \card R=m}}
        \frac{\sum_{i\in R}p_i^2}{p_R}.
\end{equation}
\end{lemma}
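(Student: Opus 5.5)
The plan is to compute $H_C(p)=\sum_{i=1}^N p_i\,\Pp(i\in S_C)$ item by item. For each item we express the membership event through a count of larger ages, expand its indicator into binomial moments, and evaluate those moments with the exponential race formula. Continuity of $H_C$ along segments, which is used in Corollary~\ref{cor:global-minimum}, then follows at once: \eqref{eq:residual-formula} is a finite sum of rational functions whose denominators $p_R$ stay positive on $\Delta_N^\circ$.

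\textbf{Step 1 (reduction to a count).} Ties among the $A_j$ occur with probability zero. For fixed $i$, put $X_i=\card{\set{j\ne i: A_j>A_i}}$. Item $i$ lies in $S_C$ exactly when at most $C-1$ other items have smaller age. This is the same as $X_i\ge N-C=L-1=:k$, and $k\ge1$ because $L\ge2$.

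\textbf{Step 2 (binomial-moment expansion).} I will use the identity, valid for integers $X\ge0$ and $k\ge1$,
\[
        \one\{X\ge k\}=\sum_{t=k}^{N-1}(-1)^{t-k}\binom{t-1}{k-1}\binom{X}{t}.
\]
If $X<k$, every term vanishes because $\binom{X}{t}=0$ for $t>X$. If $X\ge k$, the sum $\sum_{t=k}^{X}(-1)^{t-k}\binom{t-1}{k-1}\binom{X}{t}$ must equal $1$. I plan to prove this by induction on $X$ using Pascal's rule $\binom{X}{t}=\binom{X-1}{t}+\binom{X-1}{t-1}$. Alternatively, it follows from the standard inversion $\sum_t(-1)^{t-j}\binom{t}{j}\binom{X}{t}=\delta_{jX}$ after summing over $j\ge k$ and using $\sum_{j\ge k}\binom{t}{j}(-1)^{j}$-type telescoping. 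This identity is the only genuinely combinatorial point, and I expect it to be the main (though small) obstacle. Setting $t=m-1$ turns the coefficient into $(-1)^{m-L}\binom{m-2}{L-2}=(-1)^{m-L}\binom{m-2}{m-L}=\alpha_m$.

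\textbf{Step 3 (moments via exponential races).} We have $\E\binom{X_i}{t}=\sum_{T}\Pp(A_i<\min_{j\in T}A_j)$, where the sum runs over $T\subseteq[N]\setminus\{i\}$ with $\card T=t$. By independence of the exponentials, $\min_{j\in T}A_j\sim\Exp(p_T)$. Hence each probability equals $p_i/(p_i+p_T)=p_i/p_R$ with $R=T\cup\{i\}$ and $\card R=m=t+1$. Taking expectations in Step~2 gives
\[
        \Pp(i\in S_C)=\sum_{m=L}^{N}\alpha_m\sum_{\substack{R\ni i\\ \card R=m}}\frac{p_i}{p_R}.
\]

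\textbf{Step 4 (assembly).} Multiply by $p_i$, sum over $i$, and exchange the order of summation so that the sum over $R$ is outside and the sum over $i\in R$ is inside. This yields exactly \eqref{eq:residual-formula}.
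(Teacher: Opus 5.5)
Your proof is correct, and it reaches \eqref{eq:residual-formula} by a genuinely different route from the paper's.

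The paper conditions on $A_i=t$ and writes $\Pp(i\in S_C)$ as an integral over the set $S$ of items younger than $i$. It then expands $\prod_{j\in S}(1-e^{-p_jt})$, regroups the double sum by $B=S\setminus A$, and collapses the coefficient with the partial alternating sum $\sum_{h\le q}(-1)^h\binom{M}{h}=(-1)^q\binom{M-1}{q}$. Only then does it integrate, so $R$ appears as the complement $[N]\setminus B$ of a set of younger items.

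You work instead with the count $X_i$ of older items. You invoke Jordan's binomial-moment formula for $\one\{X\ge k\}$ and evaluate each binomial moment by an exponential race, so $R=T\cup\{i\}$ is a set in which $i$ is the youngest. Your planned induction does close. After Pascal's rule the shifted sum is $\sum_s(-1)^{s-(k-1)}\binom{s}{k-1}\binom{X-1}{s}=\delta_{X,k}$, so $f(X)=f(X-1)+\delta_{X,k}$ with $f(0)=0$. One small correction: state the identity for $0\le X\le N-1$, not for all $X\ge0$, since you truncated the sum at $t=N-1$; this is harmless because $X_i\le N-1$.

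Your route avoids integrals and the $(A,B)$ regrouping. It identifies $\alpha_m$ as the Jordan coefficient $(-1)^{t-k}\binom{t-1}{k-1}$ with $t=m-1$ and $k=L-1$. It also gives each summand a meaning: $\sum_{i\in R}p_i^2/p_R$ is the expected popularity of the youngest item of $R$.

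The paper's computation is more mechanical, but it stays in the same product--Laplace-integral style that it later uses to prove positivity of the kernels. The hypothesis $C<N$ enters both arguments at the corresponding point: your $k\ge1$ and the paper's $q<M$.
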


\begin{proof}
Fix an item $i$.  Its contribution to the hit rate is $p_i\Pp(i\in S_C)$.
Conditioning on $A_i=t$ gives
\begin{align}
        p_i\Pp(i\in S_C)
        & =
        p_i^2\int_0^\infty e^{-p_i t}
        \sum_{\substack{S\subseteq [N]\setminus\{i\}\\ \card S\le C-1}}
        \prod_{j\in S}(1-e^{-p_jt})
        \prod_{\ell\notin S\cup\{i\}}e^{-p_\ell t}
        \dd t.
        \label{eq:item-contribution}
\end{align}
Here $S$ is the set of other items whose ages are smaller than $t$; item
$i$ is in the cache precisely when $\card S\le C-1$.

Expand
\[
        \prod_{j\in S}(1-e^{-p_jt})
        =
        \sum_{A\subseteq S}(-1)^{\card A}e^{-p_A t}.
\]
For a term indexed by $A\subseteq S$, put $B=S\setminus A$.  The total
exponent in \eqref{eq:item-contribution} is
\[
        p_i+p_A+p_{[N]\setminus(S\cup\{i\})}=1-p_B.
\]
Now fix $B\subseteq [N]\setminus\{i\}$ with $b=\card B\le C-1$.  The set
$A$ may be any subset of $[N]\setminus(\{i\}\cup B)$ with
$\card A\le C-1-b$.  Hence the coefficient of $e^{-(1-p_B)t}$ is
\[
        \sum_{h=0}^{C-1-b}(-1)^h\binom{N-1-b}{h}.
\]
Since $C<N$, the elementary identity
\[
        \sum_{h=0}^q(-1)^h\binom{M}{h}
        =(-1)^q\binom{M-1}{q},
        \qquad 0\le q<M,
\]
shows that this coefficient is
\[
        (-1)^{C-1-b}\binom{N-2-b}{C-1-b}.
\]
Therefore
\[
        p_i\Pp(i\in S_C)
        =
        p_i^2
        \sum_{\substack{B\subseteq[N]\setminus\{i\}\\ \card B\le C-1}}
        (-1)^{C-1-\card B}
        \binom{N-2-\card B}{C-1-\card B}
        \frac{1}{1-p_B}.
\]
Set $R=[N]\setminus B$.  Then $i\in R$, $p_R=1-p_B$, and
$m:=\card R=N-\card B$ satisfies $m\ge N-C+1=L$.  Moreover
\[
        (-1)^{C-1-\card B}
        \binom{N-2-\card B}{C-1-\card B}
        =
        (-1)^{m-L}\binom{m-2}{m-L}=\alpha_m.
\]
Summing over $i$ and grouping by $R$ gives \eqref{eq:residual-formula}.
\end{proof}

\section{Pair-square decomposition}

The residual expansion separates cleanly into the uniform contribution
and pairwise fluctuations.

\begin{lemma}[Pair-square expansion of $H_C$]\label{lem:pair-square}
For every $p\in\Delta_N^\circ$,
\begin{equation}\label{eq:pair-square-H}
        H_C(p)
        =
        \frac{C}{N}
        +
        \sum_{1\le a<b\le N}(p_a-p_b)^2J_{ab}^{(C)}(p),
\end{equation}
where
\begin{equation}\label{eq:J-def}
        J_{ab}^{(C)}(p)
        =
        \sum_{\substack{R\subseteq[N]:\ \{a,b\}\subseteq R\\ \card R\ge L}}
        (-1)^{\card R-L}
        \binom{\card R-2}{\card R-L}
        \frac{1}{\card R\,p_R}.
\end{equation}
\end{lemma}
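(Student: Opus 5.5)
Starting from Lemma~\ref{lem:residual}, I apply a variance identity on each residual subset $R$ with $\card R=m$:
\[
        \sum_{i\in R}p_i^2
        =
        \frac{p_R^2}{m}
        +\frac{1}{m}\sum_{\substack{a<b\\ a,b\in R}}(p_a-p_b)^2 .
\]
This is the identity $m\sum_{i\in R} p_i^2-p_R^2=\sum_{a<b\in R}(p_a-p_b)^2$. Dividing by $p_R$ gives
\[
        \frac{\sum_{i\in R}p_i^2}{p_R}=\frac{p_R}{m}+\frac{1}{m\,p_R}\sum_{\substack{a<b\\ a,b\in R}}(p_a-p_b)^2 .
\]
Substituting into \eqref{eq:residual-formula} splits $H_C(p)$ into a "mean" part and a "fluctuation" part. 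For the fluctuation part, I exchange the order of summation: sum over pairs $a<b$ first, then over $R\supseteq\{a,b\}$ with $\card R\ge L$. The coefficient of $(p_a-p_b)^2$ is then exactly $J^{(C)}_{ab}(p)$ as in \eqref{eq:J-def}, since $\alpha_{\card R}=(-1)^{\card R-L}\binom{\card R-2}{\card R-L}$.

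It remains to show that the mean part
\[
        M(p):=\sum_{m=L}^N\frac{\alpha_m}{m}\sum_{\card R=m}p_R
\]
equals $C/N$. Since $\sum_{\card R=m}p_R=\binom{N-1}{m-1}\sum_i p_i=\binom{N-1}{m-1}$, the quantity $M(p)$ is independent of $p$. The cleanest way to evaluate it is to avoid computing the binomial sum directly: apply the identity already established at $p=u$. There every difference $p_a-p_b$ vanishes, so the expansion gives $H_C(u)=M(u)=M$. By symmetry (as in the proof of Corollary~\ref{cor:global-minimum}, which uses only the definition \eqref{eq:HC-def}), $H_C(u)=C/N$. Hence $M=C/N$ for all $p$, and \eqref{eq:pair-square-H} follows.

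The only step requiring care is this constant term. A direct verification would need the identity
\[
        \sum_{m=L}^N(-1)^{m-L}\binom{m-2}{m-L}\frac{1}{m}\binom{N-1}{m-1}=\frac{C}{N},
\]
which is a nontrivial alternating binomial sum. The symmetry argument bypasses it. I will also check that this is not circular: the symmetry fact $\Pp(i\in S_C)=C/N$ under $u$ is independent of the lemma. Note also that $p_R>0$ for nonempty $R$ in the interior simplex, so every term is well defined.
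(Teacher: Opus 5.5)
Your proposal is correct and follows the paper's proof essentially step for step. You apply the variance identity on each residual subset, exchange the summation to read off $J_{ab}^{(C)}$, note that the mean part is constant via $\sum_{\card R=m}p_R=\binom{N-1}{m-1}$, and identify that constant as $C/N$ by evaluating at $p=u$ and using symmetry, which, as you rightly check, does not depend on the lemma itself.
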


\begin{proof}
For any nonempty finite set $R$ with $m=\card R$,
\begin{equation}\label{eq:variance-identity}
        \sum_{i\in R}p_i^2
        =
        \frac{p_R^2}{m}
        +
        \frac{1}{m}
        \sum_{\substack{a<b\\ a,b\in R}}(p_a-p_b)^2.
\end{equation}
This follows from
\[
        \sum_{\substack{a<b\\ a,b\in R}}(p_a-p_b)^2
        =m\sum_{i\in R}p_i^2-p_R^2.
\]
Substituting \eqref{eq:variance-identity} into
\eqref{eq:residual-formula} gives
\begin{align*}
        H_C(p)
        &=
        \sum_{m=L}^N\alpha_m
        \sum_{\substack{R\subseteq[N]\\ \card R=m}}
        \frac{p_R}{m} \\
        &\quad+
        \sum_{1\le a<b\le N}(p_a-p_b)^2
        \sum_{\substack{R\subseteq[N]:\ \{a,b\}\subseteq R\\ \card R\ge L}}
        \frac{\alpha_{\card R}}{\card R\,p_R}.
\end{align*}
The second term is the pair-square term in \eqref{eq:pair-square-H}.  The
first term is constant on the simplex, because
\[
        \sum_{\substack{R\subseteq[N]\\ \card R=m}}p_R
        =
        \binom{N-1}{m-1}\sum_{i=1}^Np_i
        =
        \binom{N-1}{m-1}.
\]
To identify the constant, evaluate at $p=u$.  By symmetry,
$H_C(u)=C/N$, and at $p=u$ all pair differences vanish.  Hence the
constant is $C/N$.
\end{proof}

\section{Radial differentiation}

Let $q\in\Delta_N^\circ$ and set
\[
        p(\theta)=u+\theta(q-u),\qquad 0<\theta\le1.
\]
For readability, write $x=p(\theta)$ during the differentiation.  If
$R\subseteq[N]$ and $m=\card R$, then
\begin{equation}\label{eq:radial-basic-derivatives}
        \theta\frac{\dd x_i}{\dd\theta}=x_i-\frac1N,
        \qquad
        \theta\frac{\dd x_R}{\dd\theta}=x_R-\frac{m}{N},
        \qquad
        \theta\frac{\dd}{\dd\theta}(x_a-x_b)=x_a-x_b.
\end{equation}
Consequently,
\begin{align}
        \theta\frac{\dd}{\dd\theta}
        \left(\frac{(x_a-x_b)^2}{m x_R}\right)
        &=(x_a-x_b)^2
        \left(
            \frac{2}{m x_R}
            -
            \frac{x_R-m/N}{m x_R^2}
        \right) \notag\\
        &=(x_a-x_b)^2
        \left(
            \frac{1}{m x_R}
            +
            \frac{1}{N x_R^2}
        \right).
        \label{eq:term-differentiation}
\end{align}
Differentiating \eqref{eq:pair-square-H} term by term and using
\eqref{eq:term-differentiation} gives
\[
        \frac{\dd}{\dd\theta}H_C(p(\theta))
        =
        \frac{1}{\theta}
        \sum_{1\le a<b\le N}
        (x_a-x_b)^2
        \sum_{\substack{R\subseteq[N]:\ \{a,b\}\subseteq R\\ \card R\ge L}}
        (-1)^{\card R-L}
        \binom{\card R-2}{\card R-L}
        \left(
            \frac{1}{\card R\,x_R}
            +
            \frac{1}{N x_R^2}
        \right).
\]
This is precisely \eqref{eq:radial-derivative}, with $K_{ab}^{(C)}$
defined by \eqref{eq:kernel-def}.  It remains only to prove that these
kernels are strictly positive.

\section{Strict positivity of the pair kernels}

Fix $p\in\Delta_N^\circ$ and a pair $a<b$.  Let
\[
        T=[N]\setminus\{a,b\},
        \qquad
        s=p_a+p_b,
        \qquad
        r=L-2=N-C-1.
\]
Since $1\le C<N$, we have $0\le r\le N-2=\card T$.  Writing
$R=\{a,b\}\cup U$ with $U\subseteq T$, the coefficient in
\eqref{eq:kernel-def} becomes
\[
        (-1)^{\card R-L}\binom{\card R-2}{\card R-L}
        =
        (-1)^{\card U-r}\binom{\card U}{r},
\]
where the binomial coefficient is zero if $\card U<r$.  Hence
\begin{equation}\label{eq:K-Phi-Psi}
        \frac1N K_{ab}^{(C)}(p)=\Phi_r+\frac1N\Psi_r,
\end{equation}
where
\begin{align}
        \Phi_r
        &: =
        \sum_{U\subseteq T}
        (-1)^{\card U-r}\binom{\card U}{r}
        \frac{1}{(\card U+2)(s+p_U)},
        \label{eq:Phi-def}
        \\
        \Psi_r
        &: =
        \sum_{U\subseteq T}
        (-1)^{\card U-r}\binom{\card U}{r}
        \frac{1}{(s+p_U)^2}.
        \label{eq:Psi-def}
\end{align}
We prove that both $\Phi_r$ and $\Psi_r$ are strictly positive.

For $t\ge0$ and $0\le y\le1$, define
\begin{equation}\label{eq:B-def}
        B_r(y,t)
        =
        \sum_{U\subseteq T}
        (-1)^{\card U-r}\binom{\card U}{r}
        y^{\card U}e^{-p_Ut}.
\end{equation}
Let
\[
        P_t(y)=\prod_{j\in T}(1-y e^{-p_jt}).
\]
Since
\[
        P_t(y)=\sum_{U\subseteq T}(-1)^{\card U}y^{\card U}e^{-p_Ut},
\]
we have
\begin{equation}\label{eq:B-derivative}
        B_r(y,t)=\frac{(-1)^r y^r}{r!}\,P_t^{(r)}(y).
\end{equation}
Differentiating the product gives the positive form
\begin{equation}\label{eq:B-positive-form}
        B_r(y,t)
        =
        y^r
        \sum_{\substack{W\subseteq T\\ \card W=r}}
        e^{-p_Wt}
        \prod_{j\in T\setminus W}(1-y e^{-p_jt}).
\end{equation}
For $t>0$ and $0<y\le1$, every factor
$1-y e^{-p_jt}$ is strictly positive, because $p_j>0$.  Therefore
\begin{equation}\label{eq:B-positive}
        B_r(y,t)>0,
        \qquad t>0,
        \quad 0<y\le1.
\end{equation}

Now use the elementary integral identities
\[
        \frac{1}{s+p_U}
        =\int_0^\infty e^{-(s+p_U)t}\dd t,
        \qquad
        \frac{1}{\card U+2}
        =\int_0^1 y^{\card U+1}\dd y,
\]
and
\[
        \frac{1}{(s+p_U)^2}
        =\int_0^\infty t e^{-(s+p_U)t}\dd t.
\]
Since all sums are finite, we may interchange sums and integrals.  From
\eqref{eq:Phi-def} and \eqref{eq:B-def},
\[
        \Phi_r
        =
        \int_0^\infty e^{-st}
        \int_0^1 y B_r(y,t)\dd y\dd t.
\]
By \eqref{eq:B-positive}, the integrand is strictly positive for
$t>0$ and $0<y<1$.  Hence $\Phi_r>0$.  Similarly, from
\eqref{eq:Psi-def} and \eqref{eq:B-def},
\[
        \Psi_r
        =
        \int_0^\infty t e^{-st}B_r(1,t)\dd t>0.
\]
Combining these inequalities with \eqref{eq:K-Phi-Psi} gives
$K_{ab}^{(C)}(p)>0$.  This proves Theorem~\ref{thm:kernel}, and hence
Theorem~\ref{thm:main}.

\begin{remark}[Endpoint capacities]
For $C=1$, one has $L=N$, and the residual formula reduces to
\[
        H_1(p)=\sum_{i=1}^N p_i^2.
\]
For $C=N$, the hit rate is identically $1$, so strict radial monotonicity
is not a meaningful statement.  The theorem therefore covers exactly the
nontrivial range $1\le C<N$.
\end{remark}

\section{Move-to-front search costs and miss-probability consequences}\label{sec:stack-depth}

The radial theorem has a useful distributional interpretation.  Under
independent requests, the stationary move-to-front list is the same
recency ordering used by LRU, namely the ordering of all $N$ items from
most recently requested to least recently requested.  Let $D=D(p)$ be the
stationary move-to-front search cost, equivalently the LRU stack depth of
the next requested item in stationarity.  Then $D\in\{1,\ldots,N\}$, and a
cache of capacity $C$ hits exactly when $D\le C$.  Therefore
\begin{equation}\label{eq:stack-depth-cdf}
        \Pp_p(D\le C)=H_C(p),
        \qquad 1\le C\le N.
\end{equation}
Here $H_N\equiv1$, while the nontrivial thresholds are
$1\le C<N$.

\begin{corollary}[Radial stochastic improvement for move-to-front search cost]\label{cor:stack-depth}
Let $N\ge2$, let $q\in\Delta_N^\circ$ be nonuniform, and put
$p(\theta)=u+\theta(q-u)$.  If $0<\theta_1<\theta_2\le1$, then the
move-to-front search cost under $p(\theta_2)$ is strictly smaller than the
move-to-front search cost under $p(\theta_1)$ in the usual stochastic
order: for every $1\le C<N$,
\[
        \Pp_{p(\theta_2)}(D\le C)
        >
        \Pp_{p(\theta_1)}(D\le C),
\]
or equivalently
\[
        \Pp_{p(\theta_2)}(D>C)
        <
        \Pp_{p(\theta_1)}(D>C).
\]
\end{corollary}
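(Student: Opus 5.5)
The plan is to reduce the corollary to Theorem~\ref{thm:main} through the identity \eqref{eq:stack-depth-cdf}, which says $\Pp_p(D\le C)=H_C(p)$ for every $1\le C\le N$. Accepting that identity, we fix $C$ with $1\le C<N$ and consider the function $\theta\mapsto H_C(p(\theta))$ on $[0,1]$. Theorem~\ref{thm:main} gives a strictly positive derivative at every $\theta\in(0,1]$.

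Given $0<\theta_1<\theta_2\le1$, we apply the mean value theorem on $[\theta_1,\theta_2]$. This is legitimate because $H_C(p(\theta))$ is a finite rational expression in $\theta$ by Lemma~\ref{lem:residual}. The denominators there are $p_R(\theta)$, and these remain positive because $p(\theta)$ stays in $\Delta_N^\circ$ as a convex combination of $u$ and $q$. The expression is therefore differentiable on the whole closed interval, and the mean value theorem yields
\[
H_C(p(\theta_2))-H_C(p(\theta_1))=(\theta_2-\theta_1)\,\frac{\dd}{\dd\theta}H_C(p(\theta))\Big|_{\theta=\xi}>0
\]
for some $\xi\in(\theta_1,\theta_2)\subseteq(0,1]$. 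Substituting into \eqref{eq:stack-depth-cdf} gives $\Pp_{p(\theta_2)}(D\le C)>\Pp_{p(\theta_1)}(D\le C)$. The tail form follows by taking complements, since $\Pp(D>C)=1-\Pp(D\le C)$. Because this holds for every $C$ in the nontrivial range, and $C=N$ gives equality $1=1$, the distribution functions are ordered pointwise, with strict inequality for every $C<N$. This is exactly strict dominance in the usual stochastic order.

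The only step that needs care is \eqref{eq:stack-depth-cdf} itself, that is, identifying the move-to-front search cost with LRU stack depth. The argument runs in three parts. First, the stationary MTF list equals the recency order, which is the order of the exponential ages $A_i$ in the Poissonized representation. Second, the next request is independent of that list and has law $p$. Third, a request at depth $D$ hits a capacity-$C$ LRU cache exactly when $D\le C$, so $\Pp_p(D\le C)=\E\bigl[\sum_{i\in S_C}p_i\bigr]=H_C(p)$ by \eqref{eq:HC-def}. The paper states this identity just before the corollary, so I would cite it rather than reprove it. I therefore do not expect a genuine obstacle.
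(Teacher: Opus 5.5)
Your proposal is correct and matches the paper's proof: the identity $\Pp_p(D\le C)=H_C(p)$ reduces the claim to the strict monotonicity of $\theta\mapsto H_C(p(\theta))$ given by Theorem~\ref{thm:main}, and taking complements gives the tail form. Your mean-value-theorem step, with differentiability on $[\theta_1,\theta_2]$ justified via Lemma~\ref{lem:residual}, is sound; it simply re-derives the ``consequently strictly increasing'' clause that Theorem~\ref{thm:main} already states.
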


\begin{proof}
Identity \eqref{eq:stack-depth-cdf} gives
$\Pp_{p(\theta)}(D\le C)=H_C(p(\theta))$.  For $1\le C<N$,
Theorem~\ref{thm:main} shows that this quantity is strictly increasing in
$\theta$.  Taking complements gives the tail form.
\end{proof}

\begin{corollary}[Nondecreasing search-cost functionals]\label{cor:increasing-costs}
Under the hypotheses of Corollary~\ref{cor:stack-depth}, for every
nondecreasing function $g:\{1,\ldots,N\}\to\R$,
\[
        \E_{p(\theta_2)}g(D)\le \E_{p(\theta_1)}g(D).
\]
The inequality is strict whenever $g$ is not constant.  In particular, the
LRU miss probability
\[
        M_C(p):=1-H_C(p)=\Pp_p(D>C)
\]
is strictly decreasing along every nonconstant ray away from uniform for
each $1\le C<N$, and the expected move-to-front search cost
\[
        \E_p D=1+\sum_{c=1}^{N-1}M_c(p)
\]
is strictly decreasing along every such ray.
\end{corollary}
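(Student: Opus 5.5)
The plan is to reduce the statement to Corollary~\ref{cor:stack-depth} through the standard summation-by-parts representation of $\E g(D)$ in terms of tail probabilities. Fix a nondecreasing $g:\{1,\ldots,N\}\to\R$. Since $D$ takes values in $\{1,\ldots,N\}$, we write
\[
        g(D)=g(1)+\sum_{c=1}^{N-1}\bigl(g(c+1)-g(c)\bigr)\one\{D>c\}.
\]
Taking expectations under an arbitrary $p\in\Delta_N^\circ$ gives
\[
        \E_p g(D)=g(1)+\sum_{c=1}^{N-1}\bigl(g(c+1)-g(c)\bigr)M_c(p),
        \qquad M_c(p)=\Pp_p(D>c)=1-H_c(p),
\]
where the identification of $M_c$ with the tail uses \eqref{eq:stack-depth-cdf}.

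Next I compare the two parameter values term by term. For $0<\theta_1<\theta_2\le1$, subtracting the representations yields
\[
        \E_{p(\theta_1)}g(D)-\E_{p(\theta_2)}g(D)
        =\sum_{c=1}^{N-1}\bigl(g(c+1)-g(c)\bigr)\bigl(M_c(p(\theta_1))-M_c(p(\theta_2))\bigr).
\]
Each increment $g(c+1)-g(c)$ is nonnegative because $g$ is nondecreasing. Each tail difference is strictly positive for $1\le c<N$ by Corollary~\ref{cor:stack-depth}, which applies since $q$ is nonuniform. So the sum is nonnegative, which proves the weak inequality.

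For strictness, suppose $g$ is not constant. Then some increment $g(c+1)-g(c)$ with $1\le c\le N-1$ is strictly positive, and multiplying it by a strictly positive tail difference makes the whole sum strictly positive.

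The two named special cases follow by choosing $g$. Taking $g=\one\{\cdot>C\}$ gives the statement about $M_C$, which is also exactly the tail form of Corollary~\ref{cor:stack-depth}. Taking $g(d)=d$, whose increments are all equal to $1$, gives the formula $\E_pD=1+\sum_{c=1}^{N-1}M_c(p)$ together with its strict decrease.

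There is no real obstacle. The only point needing care is checking that every index $c$ appearing in the sum lies in the nontrivial range $1\le c<N$, where Corollary~\ref{cor:stack-depth} applies. The term with $c=N$ does not appear because $M_N\equiv0$.
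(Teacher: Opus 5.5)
Your proposal is correct and follows essentially the same route as the paper. Both arguments use the tail-sum decomposition $\E g(D)=g(1)+\sum_{c=1}^{N-1}\bigl(g(c+1)-g(c)\bigr)\Pp(D>c)$, nonnegative increments, and the strict tail decrease from Corollary~\ref{cor:stack-depth}, and both obtain the miss-probability and expected-cost statements from $g=\one_{\{C+1,\ldots,N\}}$ and $g(d)=d$.
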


\begin{proof}
Use the discrete tail-sum decomposition
\begin{equation}\label{eq:g-tail-decomp}
        \E g(D)=g(1)+\sum_{c=1}^{N-1}\bigl(g(c+1)-g(c)\bigr)\Pp(D>c).
\end{equation}
If $g$ is nondecreasing, every coefficient in the sum is nonnegative, and
Corollary~\ref{cor:stack-depth} gives the asserted monotonicity.  If
$g$ is not constant, at least one coefficient $g(c+1)-g(c)$ is positive,
and the corresponding tail probability decreases strictly.  The statements
about miss probability and expected search depth are the special cases
$g=\one_{\{C+1,\ldots,N\}}$ and $g(d)=d$.
\end{proof}

In this language, Theorem~\ref{thm:main} proves the radial restriction of
the Fill--Holst tail-concavity conjecture.  Hildebrand's counterexample
shows that full Schur-concavity of the search-cost tails is false.  The
present result shows that the conjectured monotonicity nevertheless
survives on every segment emanating from the uniform distribution.

\section{Why the theorem is radial, not Schur-convex}\label{sec:not-schur}

The preceding result should not be interpreted as a disguised
Schur-convexity theorem.  Full Schur-convexity would require monotonicity
under all elementary majorization transfers, not merely along rays from the
uniform vector.  The Fill--Holst tail conjecture is false
\cite{Hildebrand1999}, and the known Vanichpun--Makowski examples show
that LRU can violate the corresponding majorization monotonicity
\cite{VanichpunMakowski2004,MakowskiVanichpun2005}.
The point is that the radial direction from uniform aligns with a local
asymmetry in the LRU occupancy map.

Appendix~\ref{app:jacobian-proof} gives the details, but the local
mechanism is simple to state.  Let $\pi_k(\lambda)$ denote the probability
that item $k$ is present in the stationary LRU cache when the exponential
rates are $\lambda$.  For $i\ne k$, the off-diagonal sensitivity has the
form
\[
        \frac{\partial \pi_k}{\partial \lambda_i}(\lambda)
        =-\lambda_k G_{ik}(\lambda),
        \qquad G_{ik}(\lambda)=G_{ki}(\lambda)>0.
\]
Thus the occupancy Jacobian is generally asymmetric.  For an arbitrary
tangent direction $\delta$ with $\sum_i\delta_i=0$, the occupancy-sensitivity
part of the directional derivative contains
\[
        \sum_{i<k}G_{ik}(\lambda)
        (\lambda_i-\lambda_k)(\lambda_k\delta_i-\lambda_i\delta_k),
\]
and the minor $\lambda_k\delta_i-\lambda_i\delta_k$ has no fixed sign.
Along the radial path $\lambda=u+\theta(q-u)$, however,
\[
        \delta_i=\frac{\lambda_i-1/N}{\theta},
        \qquad
        \lambda_k\delta_i-\lambda_i\delta_k
        =\frac{\lambda_i-\lambda_k}{N\theta},
\]
so the sign-indefinite term collapses to the positive square
\[
        \frac{1}{N\theta}
        \sum_{i<k}G_{ik}(\lambda)(\lambda_i-\lambda_k)^2.
\]
This is the local reason radial monotonicity survives even though full
Schur-convexity fails.

\section{Further directions}\label{sec:further-directions}

The distinction between global Schur-concavity and radial monotonicity
suggests a broader structural problem.  For policies such as RANDOM
replacement and FIFO, stronger Schur-concavity results are already known
under the IRM, so radial monotonicity follows immediately
\cite{VanichpunMakowski2004,MakowskiVanichpun2005}.  LRU/MTF is different:
full Schur-concavity fails, but the present paper proves strict radial
monotonicity.  This raises the corresponding radial question for other
self-organizing policies for which full Schur-concavity fails or remains
unknown.  Identifying the finite-state structure that controls radial
monotonicity for such policies is left for future work.

\appendix

\section{An occupancy-Jacobian proof}\label{app:jacobian-proof}

This appendix gives an independent proof of Theorem~\ref{thm:main}.  The
main proof above is shorter, but the Jacobian proof isolates the local
mechanism behind the radial positivity.

Fix $\lambda\in\R^N_{>0}$, and let
$A_i\sim\Exp(\lambda_i)$ be independent.  Let $S_C(\lambda)$ be the set
of the $C$ indices with smallest ages, and define
\[
        \pi_k(\lambda)=\Pp(k\in S_C(\lambda)).
\]
The map $\pi_k$ is homogeneous of degree zero, and
\begin{equation}\label{eq:app-sums}
        \sum_{k=1}^N\pi_k(\lambda)=C,
        \qquad
        H_C(p)=\sum_{k=1}^Np_k\pi_k(p).
\end{equation}
The first identity holds because $\card S_C=C$ deterministically.
Differentiating it gives
\begin{equation}\label{eq:app-rowsum}
        \sum_{k=1}^N\frac{\partial\pi_k}{\partial\lambda_i}(\lambda)=0
        \qquad\text{for every }i.
\end{equation}

\begin{lemma}[Occupancy sensitivity]\label{lem:app-sens}
For $i\ne k$ and any $\lambda\in\R^N_{>0}$,
\begin{equation}\label{eq:app-sens}
        \frac{\partial\pi_k}{\partial\lambda_i}(\lambda)
        =-\lambda_k G_{ik}(\lambda),
\end{equation}
where
\[
        G_{ik}(\lambda)
        =
        \int_0^\infty t e^{-(\lambda_i+\lambda_k)t}
        \Pp\bigl(M_{ik}(t)=C-1\bigr)\dd t,
\]
and
\[
        M_{ik}(t)=\#\{j\notin\{i,k\}:A_j<t\}.
\]
The kernel is symmetric and nonnegative, $G_{ik}=G_{ki}\ge0$.  If
$1\le C<N$, then $G_{ik}>0$ for every pair $i\ne k$.
\end{lemma}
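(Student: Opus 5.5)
We prove Lemma~\ref{lem:app-sens} by writing $\pi_k$ as an integral over the age of item $k$ and differentiating only the factor that involves $\lambda_i$. Conditioning on $A_k=t$, item $k$ is cached exactly when at most $C-1$ of the other ages lie below $t$. Split those other items into $i$ and the rest, whose count below $t$ is $M_{ik}(t)$:
\[
        \pi_k(\lambda)=\int_0^\infty \lambda_k e^{-\lambda_k t}
        \Bigl[(1-e^{-\lambda_i t})\Pp(M_{ik}(t)\le C-2)
        +e^{-\lambda_i t}\Pp(M_{ik}(t)\le C-1)\Bigr]\dd t .
\]
This is the same conditioning used in~\eqref{eq:item-contribution}. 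The law of $M_{ik}(t)$ does not depend on $\lambda_i$, so $\lambda_i$ enters only through $e^{-\lambda_i t}$. The bracket equals
\[
        \Pp(M_{ik}(t)\le C-2)+e^{-\lambda_i t}\Pp(M_{ik}(t)=C-1).
\]
Differentiating in $\lambda_i$ under the integral sign gives
\[
        \frac{\partial\pi_k}{\partial\lambda_i}
        =-\lambda_k\int_0^\infty t\,e^{-(\lambda_i+\lambda_k)t}\,\Pp(M_{ik}(t)=C-1)\dd t
        =-\lambda_k G_{ik}(\lambda).
\]
Differentiation under the integral is justified by dominated convergence. For $\lambda_i$ in a compact subinterval $[\epsilon,\Lambda]$ of $(0,\infty)$, the derivative of the integrand is bounded by $\lambda_k t e^{-(\lambda_k+\epsilon)t}$, which is integrable.

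Symmetry $G_{ik}=G_{ki}$ is immediate: both the weight $e^{-(\lambda_i+\lambda_k)t}$ and the random variable $M_{ik}(t)$ depend only on the unordered pair $\{i,k\}$. Nonnegativity is immediate because the integrand is nonnegative.

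For strict positivity, assume $1\le C<N$. The set $[N]\setminus\{i,k\}$ has $N-2$ elements, and $0\le C-1\le N-2$. Pick any $W\subseteq[N]\setminus\{i,k\}$ with $\card W=C-1$. For every $t>0$,
\[
        \Pp(M_{ik}(t)=C-1)\ge\prod_{j\in W}(1-e^{-\lambda_j t})\prod_{j\notin W\cup\{i,k\}}e^{-\lambda_j t}>0 .
\]
This holds because every $\lambda_j>0$. The integrand of $G_{ik}$ is therefore continuous and strictly positive on $(0,\infty)$, so $G_{ik}>0$.

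The only step requiring care is the first one: splitting off item $i$ correctly, so that the $\lambda_i$-dependence collapses to the single boundary event $\{M_{ik}(t)=C-1\}$. The rest is routine.
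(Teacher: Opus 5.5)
Your proposal is correct and follows essentially the same route as the paper. You condition on $A_k=t$, split off item $i$ so that $\lambda_i$ enters only through $e^{-\lambda_i t}\Pp(M_{ik}(t)=C-1)$, differentiate under the integral with a dominated-convergence bound, and get strict positivity from the Poisson-binomial mass being positive for $t>0$; your explicit choice of a single $(C-1)$-subset $W$ just makes concrete the paper's remark that this mass is a nonempty sum of strictly positive products.
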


\begin{proof}
Conditioning on $A_k=t$ gives
\[
        \pi_k(\lambda)
        =
        \int_0^\infty \lambda_k e^{-\lambda_k t}\Phi_k(t;\lambda)\dd t,
\]
where
\[
        \Phi_k(t;\lambda)
        =
        \Pp\left(\sum_{j\ne k}\one_{\{A_j<t\}}\le C-1\right).
\]
For $i\ne k$, write
$M_{ik}(t)=\sum_{j\notin\{i,k\}}\one_{\{A_j<t\}}$.  Splitting off the
$i$th indicator and putting $x_i(t)=1-e^{-\lambda_i t}$,
\[
        \Phi_k
        =\Pp(M_{ik}\le C-1)-x_i(t)\Pp(M_{ik}=C-1).
\]
Hence
\[
        \frac{\partial\Phi_k}{\partial\lambda_i}
        =-t e^{-\lambda_i t}\Pp(M_{ik}(t)=C-1).
\]
Differentiating under the integral is justified by dominated convergence,
for example on compact subsets of $\R^N_{>0}$: the differentiated
integrand is bounded by an integrable multiple of $t e^{-\eta t}$ for
some $\eta>0$.  This gives \eqref{eq:app-sens}.  The symmetry
$G_{ik}=G_{ki}$ is immediate from the integral expression.
For strict positivity, note that when $1\le C<N$, we have
$0\le C-1\le N-2$.  For every $t>0$, each event probability in the
Poisson-binomial mass $\Pp(M_{ik}(t)=C-1)$ is a nonempty sum of strictly
positive products, so the integrand is strictly positive on $(0,\infty)$.
\end{proof}

The diagonal entries are determined by \eqref{eq:app-rowsum}:
\begin{equation}\label{eq:app-diagonal}
        \frac{\partial\pi_i}{\partial\lambda_i}(\lambda)
        =\sum_{k\ne i}\lambda_kG_{ik}(\lambda).
\end{equation}

We also need the elementary monotonicity of occupancy probabilities.

\begin{lemma}[Ordering law]\label{lem:app-ordering-law}
For every permutation $\sigma=(\sigma_1,\ldots,\sigma_N)$ of $[N]$,
\[
        \Pp(A_{\sigma_1}<A_{\sigma_2}<\cdots<A_{\sigma_N})
        =
        \prod_{r=1}^N
        \frac{\lambda_{\sigma_r}}{\sum_{m=r}^N\lambda_{\sigma_m}}.
\]
Equivalently, the exponential-clock order is generated by repeatedly
choosing the next index from the remaining indices with probability
proportional to its rate.
\end{lemma}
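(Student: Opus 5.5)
We prove the formula by a sequential-sampling description of the order of the clocks. By the memoryless property, the minimum of the exponential clocks indexed by a set $R\subseteq[N]$ is $\Exp(\lambda_R)$ with $\lambda_R=\sum_{m\in R}\lambda_m$. Moreover, the index attaining that minimum is independent of the minimum's value and equals $j$ with probability $\lambda_j/\lambda_R$. The plan is to apply this once to the full index set and then iterate on the residual clocks.

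First take $R=[N]$. The event $\{A_{\sigma_1}<\min_{m\ne\sigma_1}A_m\}$ has probability $\lambda_{\sigma_1}/\sum_m\lambda_m$. A direct check is
\[
        \int_0^\infty\lambda_{\sigma_1}e^{-\lambda_{\sigma_1}t}\prod_{m\ne\sigma_1}e^{-\lambda_m t}\dd t
        =\frac{\lambda_{\sigma_1}}{\sum_{m=1}^N\lambda_m}.
\]

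Next, condition on this event and on $A_{\sigma_1}=t$. For each $m\ne\sigma_1$ we then know only that $A_m>t$. By memorylessness, the residuals $A_m-t$ for $m\ne\sigma_1$ are independent $\Exp(\lambda_m)$ and do not depend on $t$. The conditional probability that the remaining clocks appear in the order $\sigma_2,\ldots,\sigma_N$ is therefore the same statement for the $N-1$ rates $(\lambda_{\sigma_2},\ldots,\lambda_{\sigma_N})$.

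Induction on $N$ then gives the product formula. The base case $N=1$ is trivial. Ties have probability zero because the joint law is absolutely continuous, so the strict inequalities cause no issue. The equivalent ``sequential choice'' description is just a restatement of the product: at step $r$ the next index is chosen from the remaining indices $\sigma_r,\ldots,\sigma_N$ with probability proportional to its rate.

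The only step needing care is the conditional independence of the residual ages. We make it rigorous by writing the joint density: for any $t\ge0$,
\[
        \Pp\bigl(A_{\sigma_1}\in\dd t,\ A_m-t>s_m\ \forall m\ne\sigma_1\bigr)
        =\lambda_{\sigma_1}e^{-\lambda_{\sigma_1}t}\prod_{m\ne\sigma_1}e^{-\lambda_m(t+s_m)}\dd t.
\]
This factorizes into a function of $t$ times $\prod_{m\ne\sigma_1}e^{-\lambda_m s_m}$. Hence, given $A_{\sigma_1}=t$ and that $\sigma_1$ is the minimizer, the residuals are independent $\Exp(\lambda_m)$ and independent of $t$.

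Integrating out $t$ yields the factor $\lambda_{\sigma_1}/\lambda_{[N]}$ times the $(N-1)$-clock ordering probability for the residuals. This completes the inductive step and hence the lemma.
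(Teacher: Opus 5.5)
Your proof is correct and follows the same route as the paper: the first clock to ring is $\sigma_1$ with probability $\lambda_{\sigma_1}/\sum_m\lambda_m$, memorylessness makes the residual clocks independent exponentials with their original rates, and iterating gives the product. Your joint-density factorization just writes out the conditional-independence step that the paper states in one line.
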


\begin{proof}
The first clock to ring has label $j$ with probability
$\lambda_j/\sum_m\lambda_m$.  Conditional on that first label, the
residual waiting times of the remaining clocks are again independent
exponentials with their original rates, by memorylessness.  Iterating
gives the displayed product.
\end{proof}

\begin{lemma}[Occupancy monotonicity]\label{lem:app-mono}
If $\lambda_k\ge\lambda_i$, then $\pi_k(\lambda)\ge\pi_i(\lambda)$.
Consequently,
\[
        \left(\lambda_a-\lambda_b\right)
        \left(\pi_a(\lambda)-\pi_b(\lambda)\right)\ge0
        \qquad\text{for all }a,b.
\]
\end{lemma}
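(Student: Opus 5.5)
The natural route is a coupling (swap) argument built on Lemma~\ref{lem:app-ordering-law}. Fix $i\ne k$ with $\lambda_k\ge\lambda_i$. The plan is to compare the event $\{k\in S_C\}$ with $\{i\in S_C\}$ by exchanging the roles of $i$ and $k$ in each ordering. For a permutation $\sigma$, let $\tau\sigma$ be the permutation obtained by swapping the positions of $i$ and $k$. Item $k$ lies among the first $C$ entries of $\sigma$ exactly when item $i$ lies among the first $C$ entries of $\tau\sigma$. Hence
\[
\pi_k(\lambda)-\pi_i(\lambda)=\sum_{\sigma}\bigl(\Pp(\sigma)-\Pp(\tau\sigma)\bigr)\one\{k\text{ in the first }C\text{ entries of }\sigma\}.
\]
Only those $\sigma$ with $k$ in the first $C$ entries and $i$ outside them contribute, since when both are inside (or both outside) the pair $\sigma,\tau\sigma$ contributes symmetrically and cancels. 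It therefore suffices to show $\Pp(\sigma)\ge\Pp(\tau\sigma)$ whenever $k$ precedes $i$ in $\sigma$.

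This inequality is checked directly from the product formula of Lemma~\ref{lem:app-ordering-law}. Write $\sigma$ with $k$ at position $r$ and $i$ at position $s>r$. The numerators of the two products are the same multiset of rates, so only the denominators differ. For positions $m\le r$ and $m>s$, the tail sums $\sum_{\ell\ge m}\lambda_{\sigma_\ell}$ are identical in $\sigma$ and $\tau\sigma$, because both tails contain either both of $i,k$ or neither. For $r<m\le s$, the tail sum contains $\lambda_i$ under $\sigma$ and $\lambda_k$ under $\tau\sigma$. Thus every such denominator is no larger under $\sigma$ when $\lambda_k\ge\lambda_i$, so $\Pp(\sigma)\ge\Pp(\tau\sigma)$. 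Summing over $\sigma$ gives $\pi_k\ge\pi_i$.

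The product form $(\lambda_a-\lambda_b)(\pi_a-\pi_b)\ge0$ follows by applying this with $\{i,k\}=\{a,b\}$ ordered by rate; when $\lambda_a=\lambda_b$ the factor vanishes. The main point of care is the bookkeeping in the swap. One must check that the pairing $\sigma\leftrightarrow\tau\sigma$ is a bijection on the relevant set and that the tail-sum comparison holds position by position. Both are routine once the positions $r<s$ are fixed.
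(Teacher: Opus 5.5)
Your proposal is correct and follows essentially the same route as the paper's proof. Both arguments swap the labels $i$ and $k$, reduce to orderings with exactly one of them among the first $C$ positions, and use the product formula of Lemma~\ref{lem:app-ordering-law}. In that formula only the remaining-rate denominators at positions strictly after the earlier of the two and up to the later one change, each by $\lambda_k-\lambda_i$, which gives the required inequality between paired orderings.
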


\begin{proof}
It suffices to compare the cases in which exactly one of $i,k$ belongs to
the first $C$ positions of the exponential-clock order, since the cases in
which both are cached or both are not cached cancel in $\pi_k-\pi_i$.
Let $\mathcal A$ be the set of orderings in which $i$ appears in one of
the first $C$ positions and $k$ appears after position $C$, and let
$\mathcal B$ be the analogous set with $i$ and $k$ interchanged.  Swapping
the labels $i$ and $k$ gives a bijection $\mathcal A\to\mathcal B$.

Fix $\sigma\in\mathcal A$.  Suppose $i$ occurs at position $r\le C$ and
$k$ occurs at position $s>C$.  Let $\tau$ be obtained from $\sigma$ by
swapping the labels $i$ and $k$, and put $d=\lambda_k-\lambda_i\ge0$.
In the product formula of Lemma~\ref{lem:app-ordering-law}, the numerator
factors involving $i$ and $k$ cancel in the ratio
$\Pp(\tau)/\Pp(\sigma)$, and all denominator factors outside positions
$r+1,\ldots,s$ are identical.  For $m=r+1,\ldots,s$, the remaining-rate
denominator under $\tau$ is smaller by exactly $d$ than the corresponding
denominator under $\sigma$.  Hence
\[
        \frac{\Pp(\tau)}{\Pp(\sigma)}
        =
        \prod_{m=r+1}^{s}\frac{D_m}{D_m-d}\ge1,
\]
where $D_m$ denotes the corresponding remaining-rate denominator under
$\sigma$.  Summing over the paired orderings gives
$\pi_k\ge\pi_i$.
\end{proof}

\begin{corollary}[Uniform lower bound]\label{cor:app-uniform-lower}
For every $p\in\Delta_N^\circ$,
\[
        H_C(p)\ge\frac{C}{N}.
\]
\end{corollary}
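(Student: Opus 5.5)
We prove Corollary~\ref{cor:app-uniform-lower} directly from the two identities in \eqref{eq:app-sums} and the monotonicity in Lemma~\ref{lem:app-mono}, without using the radial machinery. This is a Chebyshev-sum (rearrangement) argument. Take $\lambda=p$ and write $\pi_k=\pi_k(p)$.

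By Lemma~\ref{lem:app-mono}, every term $(p_a-p_b)(\pi_a-\pi_b)$ is nonnegative, so
\[
        0\le\sum_{a=1}^N\sum_{b=1}^N(p_a-p_b)(\pi_a-\pi_b).
\]
Expanding the right-hand side gives
\[
        \sum_{a=1}^N\sum_{b=1}^N(p_a-p_b)(\pi_a-\pi_b)
        =2N\sum_{k=1}^Np_k\pi_k-2\Bigl(\sum_{k=1}^Np_k\Bigr)\Bigl(\sum_{k=1}^N\pi_k\Bigr).
\]
Now substitute the known sums: $\sum_k p_k=1$ because $p\in\Delta_N^\circ$, and $\sum_k\pi_k=C$ by \eqref{eq:app-sums}. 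Using also $\sum_k p_k\pi_k=H_C(p)$ from \eqref{eq:app-sums}, the right-hand side equals $2NH_C(p)-2C$. Hence
\[
        0\le 2N H_C(p)-2C,
\]
which is $H_C(p)\ge C/N$.

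No step is a genuine obstacle, since all the content lies in Lemma~\ref{lem:app-mono}. The one point to check is that Lemma~\ref{lem:app-mono} applies to both orderings of each pair. It does: for any pair $a,b$, either $p_a\ge p_b$ or $p_b\ge p_a$, and the lemma's conclusion $(\lambda_a-\lambda_b)(\pi_a-\pi_b)\ge0$ is symmetric in $a$ and $b$. This gives only the weak inequality. Strictness is not claimed here and comes instead from Theorem~\ref{thm:main}.
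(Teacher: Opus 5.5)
Your proof is correct and is essentially the paper's argument. The paper writes $H_C(p)-C/N=\sum_k(p_k-1/N)\pi_k(p)=\frac1N\sum_{a<b}(p_a-p_b)(\pi_a(p)-\pi_b(p))$ and applies Lemma~\ref{lem:app-mono} termwise; this is your Chebyshev-sum identity taken over unordered rather than ordered pairs, which accounts for your factor of $2$.
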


\begin{proof}
Using \eqref{eq:app-sums},
\[
        H_C(p)-\frac{C}{N}
        =
        \sum_k\left(p_k-\frac1N\right)\pi_k(p)
        =
        \frac1N\sum_{a<b}(p_a-p_b)(\pi_a(p)-\pi_b(p)).
\]
Every summand is nonnegative by Lemma~\ref{lem:app-mono}.
\end{proof}

\begin{theorem}[Jacobian master identity]\label{thm:app-master}
Let $N\ge2$, $1\le C<N$, and let $q\in\Delta_N^\circ$ be nonuniform.  Put
$\lambda(\theta)=u+\theta(q-u)$ and
$\mathcal H(\theta)=H_C(\lambda(\theta))$.  Then, for $0<\theta\le1$,
\begin{equation}\label{eq:app-master}
        \mathcal H'(\theta)
        =
        \frac{1}{\theta}\left(H_C(\lambda(\theta))-\frac{C}{N}\right)
        +
        \frac{1}{N\theta}
        \sum_{1\le i<k\le N}
        G_{ik}(\lambda(\theta))
        (\lambda_i(\theta)-\lambda_k(\theta))^2.
\end{equation}
In particular $\mathcal H'(\theta)>0$ for $0<\theta\le1$.
\end{theorem}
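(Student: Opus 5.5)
We prove the master identity by differentiating $\mathcal H(\theta)=\sum_k\lambda_k(\theta)\pi_k(\lambda(\theta))$ along the ray with the product rule, and then show the resulting right-hand side is positive.

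\textbf{Product rule.} By \eqref{eq:app-sums},
\[
\mathcal H'(\theta)=\sum_k\lambda_k'\pi_k+\sum_k\lambda_k\sum_i\frac{\partial\pi_k}{\partial\lambda_i}\lambda_i'.
\]
By \eqref{eq:radial-basic-derivatives}, $\lambda_k'=(\lambda_k-1/N)/\theta$. So the first sum equals $\theta^{-1}\sum_k(\lambda_k-1/N)\pi_k=\theta^{-1}(H_C-C/N)$, using $\sum_k\pi_k=C$. This is the first term of \eqref{eq:app-master}.

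\textbf{Jacobian term.} Write $\delta_i=\lambda_i'$. We use Lemma~\ref{lem:app-sens} for the off-diagonal entries and \eqref{eq:app-diagonal} for the diagonal ones. Then
\[
\sum_{k,i}\lambda_k\frac{\partial\pi_k}{\partial\lambda_i}\delta_i
=\sum_i\lambda_i\delta_i\sum_{k\ne i}\lambda_kG_{ik}-\sum_{i\ne k}\lambda_k^2G_{ik}\delta_i.
\]
Group the ordered pairs $(i,k)$ and $(k,i)$ and use the symmetry $G_{ik}=G_{ki}$. Each unordered pair $i<k$ then contributes
\[
G_{ik}\bigl[\lambda_i\lambda_k\delta_i+\lambda_k\lambda_i\delta_k-\lambda_k^2\delta_i-\lambda_i^2\delta_k\bigr]=G_{ik}(\lambda_i-\lambda_k)(\lambda_k\delta_i-\lambda_i\delta_k).
\]
This is exactly the expression displayed in Section~\ref{sec:not-schur}.

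\textbf{Radial collapse.} Substituting $\delta_i=(\lambda_i-1/N)/\theta$ gives
\[
\lambda_k\delta_i-\lambda_i\delta_k=\frac{\lambda_i-\lambda_k}{N\theta},
\]
since the $\lambda_i\lambda_k$ cross terms cancel. Each pair contribution therefore becomes $G_{ik}(\lambda_i-\lambda_k)^2/(N\theta)$, which yields the second term of \eqref{eq:app-master}.

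\textbf{Positivity.} For $0<\theta\le1$ and $q\ne u$, the point $\lambda(\theta)$ is a nonuniform interior point of the simplex, so some difference $\lambda_i-\lambda_k$ is nonzero. Lemma~\ref{lem:app-sens} gives $G_{ik}>0$ for that pair, so the second term is strictly positive. The first term is nonnegative by Corollary~\ref{cor:app-uniform-lower}. Hence $\mathcal H'(\theta)>0$.

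The main obstacle is the regularity needed for the chain rule. We need $\pi_k$ to be differentiable on $\R^N_{>0}$ with continuous partials, so that it can be differentiated along the curve. This is supplied by the dominated-convergence argument inside Lemma~\ref{lem:app-sens}, which gives locally uniform bounds on compact sets, together with the diagonal formula \eqref{eq:app-diagonal}. The sign bookkeeping in the pair-grouping step is routine once the diagonal is written as a sum over $k\ne i$.
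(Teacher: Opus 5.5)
Your proposal is correct and follows the paper's proof essentially step for step. It uses the same product-rule split into the $\pi$-weighted term and the Jacobian term, the same combination of Lemma~\ref{lem:app-sens} with \eqref{eq:app-diagonal} and pair symmetrization, the same radial collapse of the minor $\lambda_k\delta_i-\lambda_i\delta_k$, and the same positivity argument via $G_{ik}>0$ and Corollary~\ref{cor:app-uniform-lower}. Your remark that the chain rule needs continuous partials is a point the paper leaves implicit. It does follow from the same dominated-convergence bound, which makes each $G_{ik}$ continuous in $\lambda$.
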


\begin{proof}
Write $\lambda=\lambda(\theta)$ and
$\dot\lambda_i=q_i-1/N$.  By the chain rule and \eqref{eq:app-sums},
\[
        \mathcal H'(\theta)
        =
        \sum_k\dot\lambda_k\pi_k
        +
        \sum_k\lambda_k\sum_i
        \frac{\partial\pi_k}{\partial\lambda_i}\dot\lambda_i
        =:T_1+T_2.
\]
Since $\dot\lambda_k=(\lambda_k-1/N)/\theta$, the first term is
\[
        T_1
        =
        \frac1\theta\sum_k\left(\lambda_k-\frac1N\right)\pi_k
        =
        \frac1\theta\left(H_C(\lambda)-\frac{C}{N}\right).
\]
For the second term, use Lemma~\ref{lem:app-sens} and
\eqref{eq:app-diagonal}:
\begin{align*}
        T_2
        &=
        \sum_i\lambda_i\dot\lambda_i\sum_{k\ne i}\lambda_kG_{ik}
        -
        \sum_{i\ne k}\lambda_k^2\dot\lambda_iG_{ik} \\
        &=
        \sum_{i\ne k}G_{ik}\lambda_k\dot\lambda_i(\lambda_i-\lambda_k).
\end{align*}
Symmetrizing over $i\leftrightarrow k$ gives
\[
        T_2
        =
        \sum_{i<k}G_{ik}(\lambda_i-\lambda_k)
        (\lambda_k\dot\lambda_i-\lambda_i\dot\lambda_k).
\]
Along the radial path,
\[
        \lambda_k\dot\lambda_i-\lambda_i\dot\lambda_k
        =
        \frac1\theta
        \left[\lambda_k\left(\lambda_i-\frac1N\right)
              -\lambda_i\left(\lambda_k-\frac1N\right)\right]
        =
        \frac{\lambda_i-\lambda_k}{N\theta}.
\]
Thus
\[
        T_2
        =
        \frac{1}{N\theta}\sum_{i<k}G_{ik}(\lambda_i-\lambda_k)^2.
\]
Adding $T_1$ gives \eqref{eq:app-master}.  The first term is
nonnegative by Corollary~\ref{cor:app-uniform-lower}.  The second term is
strictly positive because $G_{ik}>0$ for all pairs and $\lambda(\theta)$
is nonuniform when $0<\theta\le1$.  Hence $\mathcal H'(\theta)>0$.
\end{proof}

\begin{remark}[Relation to the main kernel]
The appendix proof yields a different positive decomposition of the same
radial derivative.  It splits the derivative into a nonnegative uniform
lower-bound term and a strictly positive occupancy-sensitivity square.
The main proof instead packages the whole derivative into the explicit
subset kernel $K_{ab}^{(C)}$ in \eqref{eq:kernel-def}.  The two views are
complementary: the subset proof gives the cleanest global certificate,
while the Jacobian proof explains the local mechanism.
\end{remark}


\begin{thebibliography}{12}

\bibitem{Berthet2016}
C. Berthet,
Identity of King and Flajolet \& al.\ formulae for LRU miss rate exact computation,
\emph{arXiv:1607.01283}, 2016.

\bibitem{CheTungWang2002}
H. Che, Y. Tung and Z. Wang,
Hierarchical Web caching systems: modeling, design and experimental results,
\emph{IEEE Journal on Selected Areas in Communications} \textbf{20} (2002), no.~7, 1305--1314.
\doi{10.1109/JSAC.2002.801752}

\bibitem{Fagin1977}
R. Fagin,
Asymptotic miss ratios over independent references,
\emph{Journal of Computer and System Sciences} \textbf{14} (1977), no.~2, 222--250.
\doi{10.1016/S0022-0000(77)80014-7}

\bibitem{FaginPrice1978}
R. Fagin and T. G. Price,
Efficient calculation of expected miss ratios in the independent reference model,
\emph{SIAM Journal on Computing} \textbf{7} (1978), no.~3, 288--297.
\doi{10.1137/0207025}

\bibitem{FillHolst1996}
J. A. Fill and L. Holst,
On the distribution of search cost for the move-to-front rule,
\emph{Random Structures \& Algorithms} \textbf{8} (1996), no.~3, 179--186.
\doi{10.1002/(SICI)1098-2418(199605)8:3<179::AID-RSA2>3.0.CO;2-V}

\bibitem{FlajoletGardyThimonier1992}
P. Flajolet, D. Gardy and L. Thimonier,
Birthday paradox, coupon collectors, caching algorithms and self-organizing search,
\emph{Discrete Applied Mathematics} \textbf{39} (1992), no.~3, 207--229.
\doi{10.1016/0166-218X(92)90177-C}

\bibitem{Hildebrand1999}
M. Hildebrand,
On a conjecture of Fill and Holst involving the move-to-front rule and cache faults,
\emph{Probability in the Engineering and Informational Sciences} \textbf{13} (1999), no.~3, 377--385.
\doi{10.1017/S0269964899133084}

\bibitem{Jelenkovic1999}
P. R. Jelenkovi\'c,
Asymptotic approximation of the move-to-front search cost distribution and least-recently-used caching fault probabilities,
\emph{The Annals of Applied Probability} \textbf{9} (1999), no.~2, 430--464.
\doi{10.1214/aoap/1029962750}

\bibitem{King1971}
W. Frank King III,
Analysis of demand paging algorithms,
in \emph{Information Processing 71: Proceedings of IFIP Congress 71},
Ljubljana, Yugoslavia, North-Holland, Amsterdam, 1972, pp.~485--490.

\bibitem{MakowskiVanichpun2005}
A. M. Makowski and S. Vanichpun,
Comparing locality of reference---some folk theorems for the miss rate and the output of caches,
in \emph{Performance Evaluation and Planning Methods for the Next Generation Internet},
A. Girard, B. Sans\`o and F. J. V\'azquez-Abad (eds.), Springer, Boston, MA, 2005, pp.~333--365.
\doi{10.1007/0-387-25551-6_13}

\bibitem{MarshallOlkinArnold2011}
A. W. Marshall, I. Olkin and B. C. Arnold,
\emph{Inequalities: Theory of Majorization and Its Applications}, second edition,
Springer, New York, 2011.
\doi{10.1007/978-0-387-68276-1}

\bibitem{VanichpunMakowski2004}
S. Vanichpun and A. M. Makowski,
Comparing strength of locality of reference: popularity, majorization, and some folk theorems,
in \emph{Proceedings of IEEE INFOCOM 2004}, vol.~2, Hong Kong, 2004, pp.~838--849.
\doi{10.1109/INFCOM.2004.1356972}

\end{thebibliography}
\end{document}